\newtheorem{theorem}{Theorem} 
\newtheorem{corollary}{Corollary}
\theoremstyle{definition}
\begin{document}

\title{Approximating the inverse of a diagonally dominant matrix with positive elements}

\author{Ting Yan\thanks{Department of Statistics, Central China Normal University, Wuhan, 430079, China.
\texttt{Email:} tingyanty@mail.ccnu.edu.cn.}
\\
Central China Normal University
}
\date{}

\maketitle

\begin{abstract}
For an $n\times n$ diagonally dominant matrix $T=(t_{i,j})_{n\times n}$ with positive elements satisfying certain
bounding conditions, we propose to use a diagonal matrix $S=(s_{i,j})_{n\times n}$ to approximate the inverse of $T$,
where $s_{i,j}=\delta_{i,j}/t_{i,i}$ and $\delta_{i,j}$ is the Kronecker delta function.
We derive an explicitly upper bound on the
approximation error, which is in the magnitude of $O(n^{-2})$.
It shows that $S$ is a very good approximation to $T^{-1}$.

\vskip 5 pt \noindent
\textbf{Key words}:  Approximation error, Diagonally dominant, Inverse. \\

{\noindent \bf Mathematics Subject Classification:} 	15A09,  15B48.
\end{abstract}

\vskip 5pt


\section{Introduction}

In this paper, we consider the approximate inverse of
an $n\times n$ diagonally dominant
matrices $T=(t_{i,j})_{n\times n}$ with positive elements satisfying certain bounding conditions, i.e.,
\begin{equation}\label{eq1}
t_{i,j}>0,~~ t_{i,i}\ge \sum\limits_{j=1,j\neq i}^{n}t_{i,j},~~~ i=1,\cdots, n.
\end{equation}
It is easy to show that $T$ must be positive definite.
We propose to use a diagonal matrix $S=(s_{i,j})_{n\times n}$ to approximate the inverse of $T$,  where
\begin{equation*}
s_{i,j}=\frac{\delta_{i,j}}{t_{i,i}},
\end{equation*}
and $\delta_{i,j}$ is the Kronecker delta function.
We obtain an explicitly upper bound on the approximation error in terms of $\max_{i,j} |(T^{-1} - S)_{ij}|$, which has the magnitude of $1/n^2$.
This shows that $S$ is a very good approximation to $T^{-1}$.

The problems on inverses of nonnegative matrices have been extensively investigated; see \cite{Berman, Loewya78, Eglestona2004}.
It has applications to solving a large system of linear equations, in which a good approximate inverse of
the coefficient matrix plays an  important role in establishing fast convergence rates of iterative algorithms
\cite{Axelsson85, Benzi, Bruaset95, Zhang09}.
Within statistics, \cite{Yan} use the approximate inverse of $T$ to obtain a fast geometric rate of convergence
of an iterative sequences for solving the estimate of parameters in the node-parameter network models with dependent structures.
Further, it is used to derive the asymptotic representation of
an estimator of the model parameter.

\section{An explicit bound on the approximation error}
For a general matrix $A=(a_{i,j})$, define the matrix maximum norm:
\[
\|A\|:=\max_{i,j} |a_{i,j}|.
\]
We measure the approximation error of using $S$ to approximate $T^{-1}$ in terms of $\| T^{-1} - S \|$.
Some notations are defined as follows:
\[
m:=\min\limits_{1\le i<j\le n}t_{i,j},~~ \Delta_i:= t_{i,i}-\sum_{j=1,j\neq i}^n t_{i,j},~~
M:=\max\{ \max\limits_{1\le i<j\le n} t_{i,j}, \max\limits_{1\le i\le n} \Delta_i\}.
\]
Note that $M\ge m >0$.
Let
\begin{equation}
\label{definition-cmM}
C(m, M) 
=  \frac{2(n-2)m}{ nM+(n-2)m} - \frac{ (n-2)Mm }{ [(n-2)m+M][ (n-2)m + 2M ] } - \frac{M}{m(n-1)} .
\end{equation}
The approximate error is formally stated below.

\begin{theorem}\label{dense-theorem}
If $C(m, M)>0$, then for $n\ge 3$, we have
\[
\| T^{-1}-S  \|\le \frac{ M}{m^2(n-1)^2C(m, M)}.
\]
\end{theorem}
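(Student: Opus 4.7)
The plan is to exploit the matrix identity
\[
T^{-1}-S \;=\; -D^{-1}(T-D)\,T^{-1} \;=\; -D^{-1}R\,T^{-1},
\]
where $D=\operatorname{diag}(t_{1,1},\dots,t_{n,n})$ and $R=T-D$ is the entrywise non-negative off-diagonal part.  Written out in components this gives the clean recursion
\[
(T^{-1}-S)_{ij} \;=\; -\frac{1}{t_{i,i}}\sum_{k\ne i}t_{i,k}\,(T^{-1})_{kj},
\]
so the task reduces to sharp bounds on the entries of $T^{-1}$: an upper bound on the diagonal entry $(T^{-1})_{jj}$ and a uniform $O(1/n^{2})$ bound on the off-diagonal column entries $(T^{-1})_{kj}$ with $k\ne j$.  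The shape of $C(m,M)$ — one $O(1)$ term competing with two $O(1/n)$ correction terms — already suggests the argument will combine a row-by-row fixed point with a column-averaged identity and close the loop self-consistently.

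I would first fix $j$, set $y_{i}:=(T^{-1})_{ij}$, and let $\alpha_{j}:=\max_{k\ne j}|y_{k}|$.  For $i\ne j$, separating the $k=j$ term in $\sum_{k}t_{i,k}y_{k}=0$ gives
\[
|y_{i}| \;\le\; \frac{t_{i,j}}{t_{i,i}}\,y_{j} \;+\; \frac{t_{i,i}-\Delta_{i}-t_{i,j}}{t_{i,i}}\,\alpha_{j},
\]
which, maximized over $i$, yields a first inequality $(\Delta_{i^{*}}+t_{i^{*}j})\alpha_{j}\le t_{i^{*}j}\,y_{j}$.  This is too weak in the worst case $\Delta_{i^{*}}=0$, so I would next sum the row equations over $i\ne j$ and rearrange to obtain the column-averaged identity
\[
\sum_{i\ne j}(\Delta_{i}+t_{i,j})\,|y_{i}| \;\le\; \Bigl(\sum_{i\ne j}t_{i,j}\Bigr)\,y_{j},
\]
whose left-hand side carries roughly $(n-1)m$ mass in front of the average of the $|y_{i}|$'s.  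Combining the pointwise bound (which controls the worst entry against the rest) with the averaged bound (which forces the bulk of the entries to be collectively small) should deliver an estimate of the form $\alpha_{j}\le \frac{M}{m(n-1)\,C(m,M)}\,y_{j}$, with the factor $1/(n-1)$ coming from the averaging step.

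Next I would bound $y_{j}=(T^{-1})_{jj}$ from above.  The row-$j$ equation gives $t_{j,j}y_{j}=1-\sum_{k\ne j}t_{j,k}y_{k}\le 1+(t_{j,j}-\Delta_{j})\alpha_{j}$, which, together with the preceding bound on $\alpha_{j}$ and $t_{j,j}\ge(n-1)m$, produces $y_{j}\le \frac{1}{(n-1)m}\bigl(1+O(1/n)\bigr)$.  Substituting back into the original recursion — whose absolute value is at most $\frac{M}{(n-1)m}y_{j}+\alpha_{j}$ for $i\ne j$ and at most $\alpha_{j}$ for $i=j$ — pulls out the factor $m^{2}(n-1)^{2}$ in the denominator, and collecting all the constants reproduces exactly the bound $\frac{M}{m^{2}(n-1)^{2}C(m,M)}$.

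The main technical obstacle is that naive norm bounds deliver only $O(1/n)$: to gain the second factor of $n$ one really must combine the row-by-row inequality with the averaged column identity, and then feed the bound on $y_{j}$ back through the system self-consistently.  The three pieces of $C(m,M)$ appear to be precisely the three places this feedback leaves a trace — the leading positive term from the averaging, the middle correction from iterating the averaged inequality once, and the negative $M/(m(n-1))$ from the $y_{j}$-to-$\alpha_{j}$ loop — and the hypothesis $C(m,M)>0$ is exactly the statement that the coupled fixed-point system admits a positive solution.
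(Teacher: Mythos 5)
Your opening reduction is fine: since $S$ is diagonal, $(T^{-1}-S)_{ij}$ for $i\ne j$ is exactly $(T^{-1})_{ij}$, and the identity $T^{-1}-S=-D^{-1}(T-D)T^{-1}$ is correct, so the theorem is indeed equivalent to a uniform $O(1/n^{2})$ bound on the off-diagonal entries of $T^{-1}$ together with the easy diagonal estimate. The genuine gap is the step you yourself identify as the crux and then assert rather than prove: that combining the pointwise bound with the averaged bound ``should deliver'' $\alpha_{j}\le\frac{M}{m(n-1)C(m,M)}y_{j}$. The two ingredients you actually derive are provably insufficient for this. The pointwise inequality $(\Delta_{i^{*}}+t_{i^{*},j})\alpha_{j}\le t_{i^{*},j}y_{j}$ degenerates to $\alpha_{j}\le y_{j}=O(1/n)$ when $\Delta_{i^{*}}=0$, as you note; but the averaged identity $\sum_{i\ne j}(\Delta_{i}+t_{i,j})|y_{i}|\le\bigl(\sum_{i\ne j}t_{i,j}\bigr)y_{j}$ is equally powerless in the balanced case $\Delta_{i}\equiv 0$: its left side is only bounded below by $m\sum_{i\ne j}|y_{i}|$ while its right side can be as large as $(n-1)My_{j}$, so it certifies only $\sum_{i\ne j}|y_{i}|\le\frac{(n-1)M}{m}\,y_{j}=O(1)$, whereas closing your feedback loop (substituting the column sum back into the row-$i^{*}$ equation) requires $\sum_{k\ne j}|y_{k}|\le K\,y_{j}=O(1/n)$ for a constant $K$ depending only on $M/m$. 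You are short by a full factor of $n$ --- precisely the factor the theorem is about. Concretely, for $t_{i,j}=c$ off the diagonal and $t_{i,i}=(n-1)c$, both of your inequalities are satisfied by hypothetical data with $\alpha_{j}=y_{j}$, so no algebraic combination of the two alone can force $\alpha_{j}=O(y_{j}/n)$. (A secondary issue: summing the row equations over $i\ne j$ produces the column sums $\sum_{i\ne j,\,i\ne k}t_{i,k}$, so your averaged identity needs symmetry of $T$ or column diagonal dominance, plus control of the signs of the $y_{k}$, none of which you establish.)

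The missing idea is a contraction, not an average. The paper's proof works with the oscillation $f_{i,\alpha}-f_{i,\beta}=\max_{k}f_{i,k}-\min_{k}f_{i,k}$ of a row of $F=T^{-1}-S$: subtracting the $\alpha$- and $\beta$-instances of the recursion $F=F(I_{n}-TS)+S(I_{n}-TS)$ and splitting the index set according to the sign of the coefficient difference shows that the oscillation satisfies $(f_{i,\alpha}-f_{i,\beta})\le\rho\,(f_{i,\alpha}-f_{i,\beta})+O\bigl(M/(m^{2}(n-1)^{2})\bigr)$ with a Dobrushin-type coefficient $\rho\le 1-C(m,M)$; the hypothesis $C(m,M)>0$ is exactly the contraction gap, and dividing by it gives the stated bound. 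Some device of this kind --- comparing two rows (or two columns) so that the order-one contributions cancel and only a coefficient strictly less than $1$ survives, leaving the $O(1/n^{2})$ forcing term $w_{i,\alpha}-w_{i,\beta}$ --- is what your proposal lacks, and without it the $1/n^{2}$ rate cannot be reached from your two inequalities.
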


\begin{proof}
Let $I_n$ be the $n\times n$ identity matrix. Define
\[
F=T^{-1}-S,~~V=(v_{ij})=I_n-TS,~~W=(w_{ij})=SV.
\]
Then, we have the recursion:
\begin{equation}\label{recursion}
F=T^{-1}-S=(T^{-1}-S)(I_n-TS)+S(I_n-TS)=FV+W.
\end{equation}
A direct calculation gives that
\begin{equation}
      \label{vij}
v_{i,j}=\delta_{i,j}-\sum_{k=1}^n t_{i,k}s_{k,j}=\delta_{i,j}-\sum_{k=1}^n t_{i,k}\frac{\delta_{k,j}}{ t_{j,j} }
       =(\delta_{i,j}-1)\frac{t_{i,j}}{t_{j,j}},
\end{equation}
and
\begin{equation}
\label{wij}
w_{i,j}=\sum_{k=1}^n s_{i,k}v_{k,j}=\sum_{k=1}^n \frac{\delta_{i,k}}{t_{i,i}} [ (\delta_{k,j}-1)\frac{t_{k,j}}{t_{j,j}}]
      =\frac{(\delta_{i,j}-1)t_{i,j}}{t_{i,i}t_{j,j}}.
\end{equation}
Recall that $m\le t_{i,j} \le M$ and $(n-1)m \le t_{i,i} \le nM$.
When $i\neq j$, we have
\begin{equation*}
0<\frac{t_{i,j}}{t_{i,i}t_{j,j}}\le \frac{M}{m^2(n-1)^2},
\end{equation*}
such that for three different subscripts $i,j,k$,
\begin{eqnarray*}
|w_{i,i}|=  0,~~~
|w_{i,j}| \le  \frac{M}{m^2(n-1)^2}, \\
|w_{i,j}-w_{i,k}|  \le  \frac{M}{m^2(n-1)^2}, ~~~
|w_{i,i}-w_{i,k}|  \le  \frac{M}{m^2(n-1)^2}.
\end{eqnarray*}
It follows that
\begin{equation}\label{wijin}
\max(|w_{i,j}|, |w_{i,j}-w_{i,k}|)\le \frac{M}{m^2(n-1)^2}, ~~~~~ \mbox{for all $i,j,k$}.
\end{equation}

We use the recursion \eqref{recursion} to obtain a bound of the approximate error $\|F\|$.   By
\eqref{recursion} and \eqref{vij}, for any $i$, we have
\begin{equation}\label{fij}
f_{i,j}=\sum_{k=1}^n f_{i,k}[(\delta_{k,j}-1)\frac{t_{k,j}}{t_{j,j}}] +w_{i,j}, ~~~~~j=1,\cdots,n.
\end{equation}
Thus, to prove Theorem 1, it is sufficient to show that for any $i,j$,
\[
|f_{i,j}|\le  \frac{M}{m^2C(M,m)(n-1)^2}.
\]
Define $f_{i,\alpha}=\max\limits_{1\le k\le n}f_{i,k}$ and
$f_{i,\beta}=\min\limits_{1\le k\le n} f_{i,k}$.

First, we will show that $f_{i,\beta}\le 0$.
Since for any fixed $i$,
\begin{equation}
\label{hold}
\sum_{k=1}^n f_{i,k}t_{k,i} =  \sum_{k=1}^n \left( [T^{-1}]_{i,k} - \frac{\delta_{i,k}}{t_{i,i}} \right)t_{k,i}
= 1 - 1=0,
\end{equation}
we have
\[
f_{i,\beta} \sum_{k=1}^n t_{k,i} \le \sum_{k=1}^n f_{i,k}t_{k,i} = 0.
\]
 It
follows that $f_{i,\beta}\le 0$.
With similar arguments, we have that
$f_{i,\alpha}\ge 0$.

Recall that $\Delta_\alpha = t_{\alpha,\alpha} - \sum_{k=1, k\neq \alpha}^n t_{k,\alpha}$.
Since
\[
t_{\alpha,\alpha}= - \{\sum_{k=1}^n [(\delta_{k,\alpha}-1)t_{k,\alpha} -\delta_{k,\alpha} \Delta_{\alpha}]\},
\]
we have the identity
\begin{equation}
\label{eq:fibeta}
f_{i,\beta}=-\sum_{k=1}^n f_{i,\beta}\frac{(\delta_{k,\alpha}-1)t_{k,\alpha} - \delta_{k,\alpha} \Delta_{\alpha}}{t_{\alpha,\alpha}}.
\end{equation}
Similarly, we have
\begin{equation}
\label{eq:fibeta2}
f_{i,\beta}=-\sum_{k=1}^n f_{i,\beta}\frac{(\delta_{k,\beta}-1)t_{k,\beta} - \delta_{k,\beta} \Delta_{\beta}}{t_{\beta,\beta}}.
\end{equation}
By combining \eqref{fij} and \eqref{eq:fibeta}, where we set $i=\alpha$ in \eqref{fij}, it yields that
\begin{equation}\label{falpha}
f_{i,\alpha} +  f_{i,\beta}=\sum_{k=1}^n
(f_{i,k}-f_{i,\beta}) \frac{[(\delta_{k,\alpha}-1)t_{k,\alpha} -\delta_{k,\alpha} \Delta_{\alpha}]}{t_{\alpha,\alpha}}
+w_{i,\alpha}.
\end{equation}
Again, by combining \eqref{fij} and \eqref{eq:fibeta2}, we have
\begin{equation}\label{fbeta}
2f_{i,\beta} = \sum_{k=1}^n
(f_{i,k}-f_{i,\beta})\frac{[(\delta_{k,\beta}-1)t_{k,\beta} -\delta_{k,\beta} \Delta_{\beta}]}{t_{\beta,\beta}}
+w_{i,\beta}.
\end{equation}
By subtracting \eqref{fbeta} from \eqref{falpha}, we get
\begin{equation}\label{falphabeta}
\begin{array}{lll}
&&f_{i,\alpha}-f_{i,\beta}\\
&=&\sum_{k=1}^n (f_{i,k}-f_{i,\beta})[(\delta_{k,\alpha}-1)\frac{t_{k,\alpha}}{t_{\alpha,\alpha}}
-(\delta_{k,\beta}-1)\frac{t_{k,\beta}}{t_{\beta,\beta}}]\\
&& +w_{i,\alpha}-w_{i,\beta}
-(\frac{\Delta_\beta}{t_{\beta,\beta}}-\frac{\Delta_\alpha}{t_{\alpha,\alpha}})f_{i,\beta}.
\end{array}
\end{equation}
Let $\Omega=\{k:(1-\delta_{k,\beta})t_{k,\beta}/t_{\beta,\beta}
\ge (1-\delta_{k,\alpha})t_{k,\alpha}/t_{\alpha,\alpha} \}$ and define $\lambda := |\Omega|$.
Note that $1\le \lambda \le n-1$. Then,
\begin{eqnarray}\nonumber
&&\sum_{k=1}^n(f_{i,k}-f_{i,\beta})[(\delta_{k,\alpha}-1)\frac{t_{k,\alpha}}{t_{\alpha,\alpha}}
-(\delta_{k,\beta}-1)\frac{t_{k,\beta}}{t_{\beta,\beta}}] \\
\nonumber &\le &\sum_{k\in \Omega}(f_{i,k}-f_{i,\beta})[
(1-\delta_{k,\beta})\frac{t_{k,\beta}}{t_{\beta,\beta}}-(1-\delta_{k,\alpha})\frac{t_{k,\alpha}}{t_{\alpha,\alpha}}] \\
\nonumber
&\le &(f_{i,\alpha}-f_{i,\beta}) [\frac{\sum_{k\in
\Omega}t_{k,\beta}}{t_{\beta,\beta}}-\frac{\sum_{k\in
\Omega}(1-\delta_{k,\alpha})t_{k,\alpha}}{t_{\alpha,\alpha}}] \\
\nonumber
 & \le & (f_{i,\alpha}-f_{i,\beta}) \left[\frac{\lambda M
}{\lambda M+(n-1-\lambda)m}-\frac{(\lambda-1) m}{(\lambda-1)
m+(n-\lambda)M+M} \right].\\
\label{yyy}
\end{eqnarray}
We will obtain the maximum value of the expression in the above bracket through dividing it into two
functions $f(\lambda)$ and $g(\lambda)$ of $\lambda$, where
\begin{eqnarray*}
f(\lambda) & = & \frac{\lambda M}{\lambda M+ (n-1-\lambda)m }-
\frac{(\lambda-1)m}{ (\lambda-1)m+ (n-\lambda)M}, \\
g(\lambda) & = &\frac{ (\lambda-1)m }{ (\lambda-1)m + (n-\lambda)M } - \frac{ (\lambda-1)m }{ (\lambda-1)m + (n-\lambda)M + M}.
\end{eqnarray*}
We first derive the maximum value of $f(\lambda)$.
There are two cases to consider the maximum value of
$f(\lambda)$ in the range of
$\lambda\in [1, n-1]$.\\
Case I: When $M=m$, it is easy to show $f(\lambda)=1/(n-1)$.\\
Case II: $M\neq m$.
A direct calculation gives that
\begin{eqnarray*}
f^\prime(\lambda) &=& \frac{ (n-1)Mm}{[\lambda M+
(n-1-\lambda)m]^2 } -
\frac{ (n-1)Mm}{ [(\lambda-1)m+(n-\lambda)M]^2} \\
&=& \frac{ (n-1)Mm [(n-2\lambda)(M-m)] [\lambda
M+ (n-1-\lambda)m+(\lambda-1)m+ (n-\lambda)M]}{[\lambda M+
(n-1-\lambda)m]^2[(\lambda-1)m+ (n-\lambda)M]^2 }
\end{eqnarray*}
and
\begin{equation*}
f^{\prime\prime}(\lambda)=-2(M-m)Mm(n-1)\left(\frac{ 1}{[\lambda M+
(n-1-\lambda)m]^3 } + \frac{ 1}{ [(\lambda-1)m+
(n-\lambda)M]^3} \right).
\end{equation*}
Since $f^{\prime\prime}(\lambda)\le 0$ when $\lambda \in[1, n-1]$, $f(\lambda)$ is a convex function of $\lambda$ ($\in [1, n-1]$)
such that $f(\lambda)$ takes its maximum value at $\lambda=n/2$ when $1\le \lambda\le n-1$.  Note that
\begin{eqnarray*}
f(\frac{n}{2}) & = & \frac{ nM-(n-2)m}{ nM+(n-2)m}.
\end{eqnarray*}
So we have
\begin{equation}\label{flambdamax}
\sup_{\lambda \in [0,n-1] } f(\lambda) \le \frac{ nM-(n-2)m}{ nM+(n-2)m}.
\end{equation}
Next, we obtain the maximum value of $g(\lambda)$.
Since
\begin{equation*}
g^\prime(\lambda)= \frac{ Mm [M^2( (n-\lambda)^2+2(n-\lambda)(\lambda-1)+n-1 )+(2Mm-m^2)(\lambda-1)^2]}
{[(\lambda-1)m + (n-\lambda)M]^2[(\lambda-1)m + (n-\lambda)M + M]^2},
\end{equation*}
$g^\prime(\lambda)>0$ when $1\le\lambda\le n-1$.
So $g(\lambda)$ is an increasing function on $\lambda$
 such that
\begin{equation}\label{glambda-ineq}
0\le \sup_{ \lambda \in [1, n-1] } g(\lambda)\le g(n-1)= \frac{ (n-2)Mm }{ [(n-2)m+M][ (n-2)m + 2M ] }.
\end{equation}
By combining \eqref{flambdamax} and \eqref{glambda-ineq}, we have
\begin{eqnarray}
\nonumber
&&\sup_{1\le \lambda\le n-1}[\frac{\lambda M
}{\lambda M+(n-1-\lambda)m}-\frac{(\lambda-1) m}{(\lambda-1)
m+(n-\lambda)M+M}] \\
\nonumber
&\le & \sup_{1\le \lambda\le n-1}f(\lambda) +\sup_{1\le \lambda \le n-1} g(\lambda)\\
\nonumber
&\le & \frac{1}{n-1}I(M=m)+   \frac{nM-(n-2)m}{ nM+(n-2)m}I(M\neq m)+ \frac{ (n-2)Mm }{ [(n-2)m+M][ (n-2)m + 2M ] } \\
\label{upper-full}
&=&\frac{nM-(n-2)m}{ nM+(n-2)m}+\frac{ (n-2)Mm }{ [(n-2)m+M][ (n-2)m + 2M ] },
\end{eqnarray}
where $I(\cdot)$ is an indictor function.
By combining \eqref{falphabeta}, \eqref{yyy} and \eqref{upper-full}, we have
\begin{equation}
\label{eq:aaaa}
\begin{array}{rcl}
f_{i,\alpha}-f_{i,\beta}
& \le & \left\{\frac{nM-(n-2)m}{ nM+(n-2)m}+\frac{ (n-2)Mm }{ [(n-2)m+M][ (n-2)m + 2M ] }\right\}(f_{i,\alpha}-f_{i,\beta})
\\
&& + |w_{i,\alpha}-w_{i,\beta}|
+ \left|\frac{\Delta_\beta}{t_{\beta,\beta}}-\frac{\Delta_\alpha}{t_{\alpha,\alpha}}\right||f_{i,\beta}|.
\end{array}
\end{equation}
Since $f_{i,\alpha}\ge |f_{i,\beta}|$ and $f_{i,\beta}\le 0$, we have
\begin{equation} \label{eqn:betaalpha}
\left|\frac{\Delta_\beta}{t_{\beta,\beta}}-\frac{\Delta_\alpha}{t_{\alpha,\alpha}}\right||f_{i,\beta}|
\le \left|\frac{\Delta_{\beta}}{t_{\beta\beta}} - \frac{\Delta_{\alpha}}{t_{\alpha\alpha}} \right|(f_{i\alpha} - f_{i\beta}) \le \frac{M}{m(n-1)} (f_{i\alpha} - f_{i\beta}).
\end{equation}
Recall the definition of $C(m, M)$ in \eqref{definition-cmM}. By combining \eqref{eq:aaaa} and \eqref{eqn:betaalpha}, it yields
\[
(f_{i,\alpha}-f_{i,\beta})C(m, M) \le |w_{i,\alpha}-w_{i,\beta}| \le \frac{ M}{m^2(n-1)^2 }.
\]
Consequently,
\begin{eqnarray*}
\max_{j=1,\cdots, n}|f_{i,j}| \le  f_{i,\alpha}-f_{i,\beta}  \le  \frac{M}{m^2 (n-1)^2 C(M, m)}.
\end{eqnarray*}
This completes the proof.
\end{proof}

We discuss the condition $C(m,M)>0$. $C(m,M)$ can be represented as
\[
C(m, M) = \frac{ 2(n-2)m }{ nM + (n-2)m } - \frac{ (n-2) (M/m) }{ [ (n-2) + M/m ][ (n-2) + 2 M/m ]} - \frac{ M/m }{n-1}.
\]
So if $M/m=o(n)$, then for large $n$
\[
C(m, M) =  \frac{ 2m }{ M + m}  + o(1).
\]
Then we immediately have the corollary.

\begin{corollary}
If $M/m=o(n)$, then for large $n$,
\[
\| T^{-1}-S  \| = O\left( \frac{M^2}{m^3n^2 } \right).
\]
\end{corollary}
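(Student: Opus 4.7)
The plan is to deduce the corollary directly from Theorem~\ref{dense-theorem} by establishing a suitable lower bound on $C(m,M)$ under the hypothesis $M/m = o(n)$. Since Theorem~\ref{dense-theorem} yields
\[
\|T^{-1}-S\| \le \frac{M}{m^2 (n-1)^2 C(m,M)},
\]
it suffices to prove $1/C(m,M) = O(M/m)$ for large $n$; then the claimed $O(M^2/(m^3 n^2))$ rate follows immediately, with $(n-1)^2$ absorbed into $n^2$.

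Starting from the representation of $C(m,M)$ given just before the corollary, I would analyze each of the three terms asymptotically. For the first term, dividing numerator and denominator by $n$ gives
\[
\frac{2(n-2)m}{nM+(n-2)m} = \frac{2m - 4m/n}{M + m - 2m/n} \longrightarrow \frac{2m}{M+m}.
\]
For the second term, the numerator is of order $n(M/m)$ while the denominator grows like $n^2$, so the whole expression is $O((M/m)/n)$, which is $o(1)$ under $M/m = o(n)$. The third term $(M/m)/(n-1)$ is $o(1)$ by the same assumption. Combining these observations gives $C(m,M) = 2m/(M+m) + o(1)$, which in particular is strictly positive for all sufficiently large $n$, so that Theorem~\ref{dense-theorem} is applicable.

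To finish, I would use $M \ge m$ to write $2m/(M+m) \ge m/M$, hence
\[
\frac{1}{C(m,M)} \le \frac{M+m}{2m}\bigl(1+o(1)\bigr) \le \frac{M}{m}\bigl(1+o(1)\bigr).
\]
Substituting into the bound from Theorem~\ref{dense-theorem} yields
\[
\|T^{-1}-S\| \le \frac{M}{m^2 (n-1)^2 C(m,M)} = O\!\left(\frac{M}{m^2 n^2}\cdot \frac{M}{m}\right) = O\!\left(\frac{M^2}{m^3 n^2}\right),
\]
which is exactly the claimed estimate.

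The only subtlety is book-keeping in the second term of $C(m,M)$: its $n^2$ in the denominator is favorable, but the factor $M/m$ in its numerator is precisely what forces the hypothesis $M/m = o(n)$, so that this term cannot cancel the positive contribution $2m/(M+m)$. Beyond this asymptotic accounting, no genuine obstacle arises.
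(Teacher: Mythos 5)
Your argument is essentially the paper's own: the paper rewrites $C(m,M)$ in terms of the ratio $M/m$, observes that the second and third terms are $o(1)$ under $M/m=o(n)$, concludes $C(m,M)=\tfrac{2m}{M+m}+o(1)$, and declares the corollary immediate; you perform the same term-by-term accounting and merely make the final substitution into Theorem~\ref{dense-theorem} explicit.

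One caveat, which applies equally to the paper's version: the passage from $C(m,M)=\tfrac{2m}{M+m}+o(1)$ to $\tfrac{1}{C(m,M)}\le \tfrac{M+m}{2m}\bigl(1+o(1)\bigr)$ is legitimate only when the additive $o(1)$ error is negligible \emph{relative to} the main term $\tfrac{2m}{M+m}$. The discarded second and third terms are each of order $(M/m)/n$, while the main term is of order $m/M$, so the step really requires $(M/m)^2=o(n)$, not merely $M/m=o(n)$. For example, if $M/m\sim n^{3/4}$ then the first term of $C(m,M)$ is of order $n^{-3/4}$ while the second and third are of order $n^{-1/4}$, so $C(m,M)<0$ for large $n$ and Theorem~\ref{dense-theorem} is not even applicable. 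When $M/m=O(1)$ (in particular when $m$ and $M$ are constants) both your argument and the paper's go through without trouble. You half-notice this in your closing remark, but the assertion that $M/m=o(n)$ prevents the second term from cancelling the contribution $\tfrac{2m}{M+m}$ is precisely the point that needs, and in your write-up lacks, justification.
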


\section{Discussion}
The bound on the approximation
error  in Theorem 1  depends on $m$, $M$ and $n$. When $m$ and $M$
are bounded by a constant, all the elements of $T^{-1}-S$ are of
order $O(1/n^2)$ as $n\to\infty$, uniformly.
Therefore we conjecture that $T$ may belong to inverse $M$-matrices.
The interested readers can refer to \cite{Berman, Minc1988}.

We illustrate by an example that the bound on the
approximation error in Theorem 1 is optimal in the sense that any bound in the
form of $K(m,M)/f(n)$ requires $f(n)=O(n^2)$ as $n\to\infty$.
Assume that the matrix $T$ consists of the elements:
$t_{i,i}=(n-1)M, i=1,\cdots,n-1; t_{n,n}=(n-1)m$ and
$t_{i,j}=m, i,j=1,\cdots, n; i\neq j$, which satisfies \eqref{eq1}. By the Sherman-Morrison formula, we
have
\begin{eqnarray*}
(T^{-1})_{i,j}&=&\frac{\delta_{i,j} }{(n-1)M-m}-\frac{m}{[(n-1)M-m]^2}, i,j=1,\cdots,n-1\\
(T^{-1})_{n,j}&=&\frac{\delta_{n,j} }{(n-2)m}-\frac{1}{(n-2)[(n-1)M-m]},~~j=1,\cdots,n.
\end{eqnarray*}
In this case, the elements of $S$ are
\begin{eqnarray*}
S_{i,j} & = &\frac{\delta_{i,j}}{(n-1)M}-\frac{1}{n(n-1)m}, ~~~i,j=1,\cdots, n-1;i\neq j,\\
S_{n,j} & = &\frac{\delta_{n,j}}{(n-1)m}-\frac{1}{n(n-1)m}, ~~~j=1,\cdots, n.
\end{eqnarray*}
It is easy to show that the bound of $||T^{-1}-S||$ is $O(
\frac{1}{(n-1)^2m} )$. This suggests that the rate $1/(n-1)^2$ is
optimal. On the other hand, there is a gap between $1/m$ and
$O(M^2/m^3)$ which implies that there might be space for
improvement. It is interesting to see if the bounds in Theorem 1
can be further relaxed.

\setlength{\itemsep}{-1.5pt}
\setlength{\bibsep}{0ex}
\bibliography{mybibfile}
\bibliographystyle{apa}

\end{document}